\documentclass[letterpaper, 10 pt, conference]{ieeeconf}
\IEEEoverridecommandlockouts
\overrideIEEEmargins

\usepackage{graphics}
\usepackage{epsfig}
\usepackage{times}
\usepackage{amsmath}
\usepackage{amssymb}
\usepackage[normalem]{ulem}
\usepackage{xcolor}
\usepackage{url}
\usepackage{array}
\usepackage{tikz}
\usepackage{mathtools}
\usepackage[noadjust]{cite}

\newtheorem{teo}{Theorem}[section]

\newtheorem{lem}[teo]{Lemma}
\newtheorem{prop}[teo]{Proposition}
\newtheorem{definition}[teo]{Definition}
\newtheorem{ex}[teo]{Example}
\newtheorem{rmk}[teo]{Remark}

\def\R{\mathbb R}
\def\to{\rightarrow}

\DeclarePairedDelimiter{\norm}{\lVert}{\rVert}
\DeclarePairedDelimiter{\abs}{\lvert}{\rvert}

\DeclareMathOperator{\Real}{Re}
\DeclareMathOperator{\Imag}{Im}
\DeclareMathOperator{\diff}{d\!}
\DeclareMathOperator*{\esssup}{ess\,sup}

\newcommand{\ros}{\textrm{ros}}
\newcommand{\bls}{\textrm{bls}}
\newcommand{\full}{\textrm{full}}

\title{\LARGE \bf Frequency domain approach for the stability analysis of a fast hyperbolic PDE coupled with a slow ODE}

\author{Gonzalo Arias$^1$, Swann Marx$^2$, and Guilherme Mazanti$^3$%
\thanks{This work has been partially supported by ANID Millennium Science Initiative Program trough Millennium Nucleus for Applied Control and Inverse Problems NCN19-161 and STIC-Amsud C-CAIT.}%
\thanks{$^1$Gonzalo Arias is with Facultad de Matem\'aticas, Pontificia Universidad Católica de Chile, Avda. Vicu\~na Mackenna 4860, Macul, Santiago, Chile \texttt{\small ngonzaloandres@uc.cl}}%
\thanks{$^2$Swann Marx is with LS2N, \'Ecole Centrale de Nantes \& CNRS UMR 6004, F-44000 Nantes, France \texttt{\small swann.marx@ls2n.fr}}%
\thanks{$^3$Guilherme Mazanti is with Universit\'e Paris-Saclay, CNRS, CentraleSup\'elec, Inria, Laboratoire des signaux et syst\`emes, 91190, Gif-sur-Yvette, France \texttt{\small guilherme.mazanti@inria.fr}}%
}

\begin{document}

\maketitle

\thispagestyle{empty}
\pagestyle{empty}

\begin{abstract}
This paper deals with the exponential stability of systems made of a hyperbolic PDE coupled with an ODE with different time scales, the dynamics of the PDE being much faster than that of the ODE. Such a difference of time scales is modeled though a small parameter $\varepsilon$ multiplying the time derivative in the PDE, and our stability analysis relies on the singular perturbation method. More precisely, we define two subsystems: a reduced order system, representing the dynamics of the full system in the limit $\varepsilon = 0$, and a boundary-layer system, which represents the dynamics of the PDE in the fast time scale. Our main result shows that, if both the reduced order and the boundary-layer systems are exponentially stable, then the full system is also exponentially stable for $\varepsilon$ small enough, and our strategy is based on a spectral analysis of the systems under consideration. Our main result improves a previous result in the literature, which was proved using a Lyapunov approach and required a stronger assumption on the boundary-layer system to obtain the same conclusion.
\end{abstract}

\begin{keywords}
Singular perturbation, Transport equation, Stability, Spectral methods, Time scales
\end{keywords}

\section{INTRODUCTION}
\label{intro}

Systems defined by partial differential equations (PDEs) coupled with ordinary differential equation (ODEs) appear in many applications, either through the modeling of coupled physical phenomena of different natures, one described by a PDE and the other by an ODE, such as the heated gas flow model from \cite{tang2017stability}, or through the boundary control systems described by PDEs using a dynamic controller, such as the control of a network of liquid fluids through its nodes using proportional-integral controllers from \cite{bastin2015stability}. Such a class of coupled systems has attracted much research effort in recent years and its analysis and control is an active subject \cite{AhmedAli2021Sampled, Auriol2022Advances, bastin2015stability, cerpa2020singular, Irscheid2023Output, tang2017stability, TerrandJeanne2020Regulation}.

In applications, the constituents of a coupled system may model different physical phenomena taking place in different time scales. This is the case, for instance, of electric motors, in which the electrical time scale is typically much faster than the mechanical one (see, e.g., \cite[Example~11.1]{khalil2002nonlinear}). A natural question in these situations, which is the basis of the singular perturbation theory, is whether one can approximate the fastest time scale by an instantaneous process. For linear finite-dimensional systems, its answer is given by Tikhonov's theorem, which states roughly that such an approximation is valid as soon as the dynamics of the fastest time scale is stable \cite{Kokotovic1999Singular}, and the nonlinear theory for finite-dimensional systems is also well-established \cite{khalil2002nonlinear}. Broadly speaking, the main idea of singular perturbation is to decouple the full system into two approximated subsystems under the assumption that the fast dynamics is fast enough. The approximated slow system is called the \emph{reduced order system} and the fast one, after a suitable time rescaling, is called the \emph{boundary-layer system}. Singular perturbation for infinite-dimensional systems is a more delicate topic which has attracted much research effort in recent years \cite{cerpa2020singular,cerpa-prieur2017,marx2022singular,tang2015tikhonov,tang2017stability,Tang2019Singular}, both for coupled PDEs and for PDEs coupled with ODEs, and several examples show that a straightforward generalization of the finite-dimensional theory is not possible \cite{cerpa-prieur2017,tang2015tikhonov,tang2017stability}.
 
The purpose of this paper is to study the stability properties of a system composed by a transport equation coupled through its boundary data with an ODE, when the dynamics of the transport equation is much faster than that of the ODE. More precisely, we consider here the system
\begin{equation}
\label{transport-ode}
\left\{
\begin{aligned}
    & \dot z(t) = A z(t) + B y(1,t), & & t > 0,  \\
    & \varepsilon y_t(x, t) + \Lambda y_{x}(x, t) = 0,& & x \in (0, 1),\, t > 0, \\
    & y(0,t) = G_1 y(1,t)+G_2 z(t),& & t > 0,\\
    & z(0)=z_0\\
    & y(x,0) = y_0(x),& & x\in (0,1),
\end{aligned}
\right.
\end{equation}
where $z\colon [0,\infty) \to \mathbb{R}^n$ is the state of the ODE, $y\colon [0,1] \times [0,\infty) \to \mathbb{R}^m$ is the state of the PDE, $y_x$ and $y_t$ denote the partial derivatives of $y$ with respect to its first and second variables, respectively, $\Lambda$ is a diagonal matrix in $\mathbb{R}^{m \times m}$ with positive diagonal entries, $A, B, G_1, G_2$ are matrices with appropriate dimensions, $I_m - G_1$ is invertible, and $\varepsilon > 0$. The parameter $\varepsilon$ is supposed to be small, meaning that the dynamics of the transport PDE is faster than that of the ODE.

The analysis of \eqref{transport-ode} at the light of the singular perturbation method was previously carried out in \cite{tang2017stability} through the use of a suitable Lyapunov functional, and it was shown that, if the reduced order system associated with \eqref{transport-ode} is exponentially stable and the matrices $\Lambda$ and $G_1$ satisfy a suitable matrix inequality, then the corresponding boundary-layer system is also exponentially stable in $L^2(0, 1; \mathbb R^m)$, and \eqref{transport-ode} is exponentially stable in $\mathbb R^n \times L^2(0, 1; \mathbb R^m)$ if $\varepsilon$ is small enough (see Theorem~\ref{thm:tang2017} below for a precise statement). Here, we rely instead in a spectral approach, which allows us to obtain a stronger result than that of \cite{tang2017stability}.

The sequel of this paper is organized as follows. Section~\ref{sec:Prelim} provides a description of the singular perturbation method applied to \eqref{transport-ode}, describing the corresponding reduced order and boundary-layer systems, and addresses well-posedness issues and the definitions of exponential stability used here. The statement of our main result and its proof are the subject of Section~\ref{sec:MainRes}. Section~\ref{sec:Comparison} compares our result to that of \cite{tang2017stability}, and a numerical illustration of our main result is provided in Section~\ref{sec:Numerical}. Finally, Section~\ref{sec:Concl} concludes the paper.

\smallskip

\noindent\emph{Notation.} Given a matrix $M$ in $\R^{n\times n}$, $M^{-1}$, $M^T$, $e^M$, and $\rho(M)$ denote, respectively, the inverse (when it exists), the transpose, the matrix exponential, and the spectral radius of $M$ (i.e., the largest absolute value of the eigenvalues of $M$). The identity matrix in $\mathbb R^{n \times n}$ is denoted by $I_n$. For $p \in [1, \infty]$, we use $\abs{\cdot}_p$ to denote the $p$ norm in $\mathbb R^n$ as well as the induced matrix norm in $\mathbb R^{n \times m}$, and the index $p$ is omitted from this notation when it is clear from the context or when the choice of the norm is not important. The norm $\norm{\cdot}_{p}$ in $L^p(0,1; \R^m)$ is defined for $\xi \in L^p(0, 1; \R^m)$ by $\norm{\xi}_{p}^p = \int_0^1\abs{\xi(x)}_p^p \diff x$ if $p < \infty$ and $\norm{\xi}_{\infty} = \esssup_{x \in [0, 1]} \abs{\xi(x)}_\infty$ if $p = \infty$. Given a real interval $I$ and a normed space $J, C^0(I; J)$ denotes the set of continuous functions from $I$ to $J$, and $C^0([0, 1]; \mathbb R^m)$ is endowed with the norm $\norm{\cdot}_\infty$.

\section{PRELIMINARIES}
\label{sec:Prelim}

\subsection{Singular perturbation method}
\label{sec:SPM}

Let us briefly describe the idea of the singular perturbation method applied to \eqref{transport-ode}. As $\varepsilon \to 0^+$, we expect solutions of the transport PDE in \eqref{transport-ode} to become close to solutions of $\Lambda \hat y_x = 0$. Hence, for every $t > 0$, $x \mapsto \hat y(t, x)$ is constant, and we denote its value by $\hat y_\ast(t)$. Using the third equation of \eqref{transport-ode} and the invertibility of $I_m - G_1$, we obtain that $\hat y_\ast(t) = (I_m - G_1)^{-1} G_2 z(t)$. Inserting this into the first equation of \eqref{transport-ode}, we obtain that $\dot z(t) = (A+B(I_m-G_1)^{-1}G_2) z(t)$. This motivates the introduction of the reduced order system
\begin{equation}
\label{ros}
\left\{
\begin{aligned}
\dot{\bar z}(t) & = (A+B(I_m-G_1)^{-1}G_2) \bar z(t), & & t > 0, \\
\bar z(0) & = z_0.
\end{aligned}
\right.
\end{equation}

Let us now consider the function $\tilde y$ defined by $\tilde y(x, \tau) = y(x, \varepsilon \tau) - \hat y_\ast(\varepsilon \tau) = y(x, \varepsilon \tau) - (I_m - G_1)^{-1} G_2 z(\varepsilon \tau)$, the difference between the real solution $y$ and its expected approximation $\hat y$ in a fast time scale $\tau = t / \varepsilon$. Then $\tilde y_\tau(x, \tau) + \Lambda \tilde y_x(x, \tau) = -\varepsilon (I_m - G_1)^{-1} G_2 (A z(\varepsilon \tau) + B y(1, \varepsilon \tau))$ and the boundary condition on $\tilde y$ becomes $\tilde y(0, \tau) = G_1 \tilde y(1, \tau)$. Hence, as $\varepsilon \to 0^+$, one expects $\tilde y$ to be approximated by solutions of the boundary-layer system
\begin{equation}
\label{bls}
\left\{
\begin{aligned}
& \begin{aligned}
& \bar y_\tau(x,\tau) + \Lambda \bar y_x(x,\tau)=0, & & x \in (0, 1),\, \tau > 0, \\
& \bar y(0,\tau) = G_1 \bar y(1,\tau), & & \tau > 0,
\end{aligned} \\
& \bar y(x,0)=y_0(x)-(I_m-G_1)^{-1} G_2 z_0, \quad x \in (0, 1).
\end{aligned}
\right.
\end{equation}

\subsection{Well-posedness}

The reduced order system \eqref{ros} is a well-posed linear ODE, whose solutions belong to $C^\infty([0, \infty);\allowbreak \mathbb R^n)$.

The well-posedness of the full system \eqref{transport-ode} and the boun\-dary-layer system \eqref{bls} were studied in $L^2(0, 1; \mathbb R^m)$ in \cite[Appendix~A]{bastin2016stability}. In particular, \cite[Theorem~A.4]{bastin2016stability} shows that \eqref{bls} has a unique weak solution $\bar y \in C^0([0,\infty);L^2(0,1;\R^m))$ (in the sense of \cite[Definition~A.3]{bastin2016stability}) provided that the initial condition belongs to $L^2(0,1;\R^m)$, and \cite[Theorem~A.6]{bastin2016stability} shows that \eqref{transport-ode} has a unique solution $(z, y) \in C^0([0,\infty);\R^n \times L^2(0,1;\R^m))$ in the sense of \cite[Definition~A.5]{bastin2016stability}, provided that $z_0\in \R^n$ and $y_0\in L^2(0,1;\R^m)$.

One can also obtain the well-posedness of \eqref{bls} replacing the Hilbert space $L^2(0, 1; \mathbb R^m)$ by $L^p(0, 1; \mathbb R^m)$ for any $p \in [1, \infty]$ or by $C^0([0, 1]; \mathbb R^m)$ when exploring its link with continuous-time difference equations. This was done in \cite{Chitour2016Stability} in $L^p(0, 1; \mathbb R^m)$, which proves well-posedness in its Proposition~4.6, in the sense of its Definition~4.1, while the well-posedness in $C^0([0, 1]; \mathbb R^m)$, under the compatibility condition $\bar y(0, 0) = G_1 \bar y(1, 0)$ on the initial condition, can be proved by combining the equivalence between \eqref{bls} and a difference equation established in \cite[Proposition~4.2]{Chitour2016Stability} and the well-posedness result for the latter from \cite[Chapter~9]{Hale1993Introduction}.

As for the well-posedness of \eqref{transport-ode} in spaces other than the Hilbert space $\mathbb R^n \times L^2(0, 1; \mathbb R^m)$, one can make use of the techniques transforming transport equations into delay systems, such as those used in \cite[Chapters~2 and 3]{bastin2016stability}. More precisely, denoting by $y_i$ the $i$th component of $y$ and by $\lambda_i$ the $i$th diagonal entry of $\Lambda$, the method of characteristics shows that, for every $i \in \{1, \dotsc, m\}$, the $i$th component of the transport equation of \eqref{transport-ode} is equivalent to having
\[y_i(x + \tfrac{\lambda_i}{\varepsilon} h, t + h) = y_i(x, t) \; \text{for all }t \geq 0, h \geq 0, x \in [0, 1].\]
In particular, we have $y_i(1, t) = y_i(0, t-\tfrac{\varepsilon}{\lambda_i})$, and hence \eqref{transport-ode} can be rewritten in terms of the functions $z(\cdot)$ and $y(0, \cdot)$ as the delay system of neutral type
\begin{equation}
\label{eq:delay-neutral}
\left\{
\begin{aligned}
    \dot z(t) & \textstyle = A z(t) + \sum_{i=1}^m B e_i e_i^T y(0, t - \tfrac{\varepsilon}{\lambda_i}), \\
    y(0, t) & \textstyle = \sum_{i=1}^m G_1 e_i e_i^T y(0, t - \tfrac{\varepsilon}{\lambda_i}) + G_2 z(t),
\end{aligned}
\right.
\end{equation}
where $e_1, \dotsc, e_m$ are the vectors of the canonical basis of $\mathbb R^m$. The well-posedness of \eqref{transport-ode} in $\mathbb R^n \times C^0([0, 1]; \mathbb R^m)$ can hence be deduced from the classical theory of neutral functional differential equations, and can be found in \cite[Chapter~9]{Hale1993Introduction}, under the compatibility condition $y_0(0) = G_1 y_0(1) + G_2 z_0$, while the well-posedness in $\mathbb R^n \times L^p(0, 1; \mathbb R^m)$ is covered by the results of \cite{Burns1983Linear}.

\subsection{Exponential stability}

The reduced order system \eqref{ros} is said to be exponentially stable in norm $p$ if there exist positive constants $C$ and $\nu$ such that $\abs{\bar z(t)}_p \leq C e^{-\nu t} \abs{\bar z(0)}_p$ for every solution $\bar z$ of \eqref{ros} and $t \in [0, \infty)$. Since all norms are equivalent in finite dimension, the notion of exponential stability turns out to be independent of $p$. Moreover, \eqref{ros} is exponentially stable if and only if all complex roots of its characteristic functions $\Delta_{\ros}$ have negative real part, where
\begin{equation}
\label{char-ros}
\Delta_{\ros}(s)= \det (sI_n - A-B(I_m-G_1)^{-1} G_2).
\end{equation}

Let us now consider the notions of exponential stability for \eqref{transport-ode} and \eqref{bls}.

\begin{definition}
\label{def:expo-stability}
Let $\mathcal B$ denote one of the Banach spaces $C^0([0, 1]; \mathbb R^m)$ or $L^p(0, 1; \mathbb R^m)$ for some $p \in [1, \infty]$. We say that \eqref{transport-ode} is exponentially stable in $\R^n\times \mathcal{B}$ if there exists positive constants $C,\nu$ such that, for every $z_0\in \mathbb{R}^n$ and $y_0\in \mathcal{B}$, the corresponding solution $(z,y)\colon [0,\infty) \to \mathbb{R}^n \times \mathcal{B}$ of \eqref{transport-ode} satisfies, for every $t \geq 0$,
\begin{equation}
\label{b-stab}
\abs{z(t)} + \norm{y(\cdot,t)}_\mathcal{B} \leq C e^{-\nu t}(\abs{z_0} + \norm{y_0}_\mathcal{B}),
\end{equation}
where $\abs{\cdot}$ is an arbitrary norm in $\mathbb R^n$. Analogously, we say that \eqref{bls} is exponentially stable in $\mathcal{B}$ if there exists positive constants $C,\nu$ such that, for every initial condition in $\mathcal{B}$, the corresponding solution $\bar y\colon [0,\infty) \to \mathcal{B}$ of \eqref{bls} satisfies \eqref{b-stab} with $z=0$ and $z_0 = 0$.
\end{definition}

Even though the definition of exponential stability of \eqref{bls} depends on the space $\mathcal B$, it turns out that all these definitions are equivalent for the spaces $\mathcal B$ from Definition~\ref{def:expo-stability}, as discussed in \cite[Remark~3.6]{bastin2016stability} (see also \cite[Corollary~4.11]{Chitour2016Stability} for the equivalence in $L^p$ spaces, $p \in [1, \infty]$). For this reason, in the sequel, we will say that \eqref{bls} is exponentially stable if it is exponentially stable in some (and hence every) space $\mathcal B$ among $C^0([0, 1]; \mathbb R^m)$ or $L^p(0, 1; \mathbb R^m)$, $p \in [1, \infty]$. Exponential stability of \eqref{bls} can also be characterized through the characteristic function of \eqref{bls}, $\Delta_{\bls}$, given by
\begin{equation}
\label{char-bls}
\Delta_{\bls}(s)=\det (I_m - e^{-s\Lambda^{-1}}G_1),
\end{equation}
as stated in the next result, taken from \cite[Theorem~3.5 and Remark~3.6]{bastin2016stability}.

\begin{prop}
\label{prop:stab_bls}
System \eqref{bls} is exponentially stable if and only if there exists $\alpha > 0$ such that all roots $s$ of $\Delta_{\bls}$ satisfy $\Real s \leq -\alpha$.
\end{prop}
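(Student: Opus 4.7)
My plan is to establish Proposition~\ref{prop:stab_bls} by reducing \eqref{bls} to a continuous-time delay difference equation and then invoking the classical spectral characterization of exponential stability for such equations.

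First, since $\Lambda$ is diagonal with positive entries $\lambda_1, \dotsc, \lambda_m$, the PDE in \eqref{bls} decouples componentwise, and the method of characteristics yields $\bar y_i(x,\tau) = \bar y_i(0, \tau - x/\lambda_i)$ whenever $\tau \geq x/\lambda_i$. Setting $u(\tau) := \bar y(0, \tau) \in \mathbb R^m$ and evaluating at $x = 1$, the boundary condition $\bar y(0,\tau) = G_1 \bar y(1,\tau)$ translates into the delay difference equation
\[
u(\tau) = \sum_{i=1}^m G_1 e_i e_i^T u(\tau - 1/\lambda_i).
\]
Proposition~4.2 of \cite{Chitour2016Stability} provides a quantitative equivalence between exponential decay of solutions of \eqref{bls} in any of the spaces $\mathcal B$ of Definition~\ref{def:expo-stability} and exponential decay of solutions of this difference equation in $\mathbb R^m$-valued $L^p$ or $C^0$ histories.

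Next, looking for solutions of the form $u(\tau) = e^{s\tau} v$ with $v \in \mathbb C^m \setminus \{0\}$ produces the spectral equation
\[
\det\Bigl(I_m - \sum_{i=1}^m G_1 e_i e_i^T e^{-s/\lambda_i}\Bigr) = \det\bigl(I_m - G_1 e^{-s\Lambda^{-1}}\bigr) = 0,
\]
where the first equality uses $e^{-s\Lambda^{-1}} = \sum_i e_i e_i^T e^{-s/\lambda_i}$, and applying the identity $\det(I - AB) = \det(I - BA)$ shows that this is exactly the vanishing of $\Delta_{\bls}$ as defined in \eqref{char-bls}. Hence roots of $\Delta_{\bls}$ are precisely the spectral values of the difference equation.

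It then remains to show that this difference equation is exponentially stable if and only if there exists $\alpha > 0$ such that all such spectral values satisfy $\Real s \leq -\alpha$. Sufficiency is relatively soft and would follow from a Laplace transform argument combined with a Paley--Wiener-type inversion, using that $s \mapsto I_m - G_1 e^{-s\Lambda^{-1}}$ is invertible with uniformly bounded inverse on any root-free vertical strip, a fact that relies on the almost-periodic structure of the symbol. The necessity direction is, in my view, the principal technical obstacle: the existence of a single root on the imaginary axis does not, on its own, produce an exponentially growing solution, and one must exploit the accumulation of spectral points, either through the construction of approximate eigenfunctions associated with a sequence of roots approaching the critical line, or through lower bounds on the resolvent along vertical lines. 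These are the classical results on linear difference equations from Chapter~9 of \cite{Hale1993Introduction}, recorded in the present PDE form as Theorem~3.5 of \cite{bastin2016stability} and Corollary~4.11 of \cite{Chitour2016Stability}, and I would invoke them to close the argument, combining them with the first step to transfer the conclusion back to \eqref{bls} in any of the admissible Banach spaces $\mathcal B$.
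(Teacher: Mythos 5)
The paper offers no proof of this proposition: it is quoted verbatim from Theorem~3.5 and Remark~3.6 of \cite{bastin2016stability}, and your outline --- reduction to a delay difference equation by the method of characteristics, identification of the characteristic function via $\det(I-AB)=\det(I-BA)$, and appeal to the classical spectral criterion for difference/neutral equations --- is precisely the route taken in the references the paper cites, so your argument is correct and ultimately leans on the same sources. One small imprecision: a root on the imaginary axis does yield a non-decaying solution $e^{i\omega\tau}v$ and hence already contradicts exponential stability directly; the genuine subtlety in the necessity direction is the case where every root satisfies $\Real s<0$ but the real parts accumulate at $0$ (the Henry--Hale phenomenon), which is exactly where the approximate-eigenfunction or resolvent lower-bound argument you allude to is required.
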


Similarly to \eqref{bls}, exponential stability of \eqref{transport-ode} is also independent of the space $\mathcal B$ chosen among the ones in Definition~\ref{def:expo-stability}. Indeed, \eqref{transport-ode} is equivalent to \eqref{eq:delay-neutral}, and the independence of the exponential stability of the latter on the space $\mathcal B$ was established in \cite[Corollary~II.1.8]{Kaashoek1992Characteristic} for the $L^p$ spaces, obtaining the same criterion of exponential stability as \cite[Chapter~9]{Hale1993Introduction} for $C^0$ (see also \cite{Hale2002Strong}). Hence, as for \eqref{bls}, we will say that \eqref{transport-ode} is exponentially stable if it is exponentially stable in some (and hence every) space $\mathbb R^n \times \mathcal B$ with $\mathcal B$ among $C^0([0, 1]; \mathbb R^m)$ or $L^p(0, 1; \mathbb R^m)$, $p \in [1, \infty]$. Taking the Laplace transform of \eqref{eq:delay-neutral}, the characteristic function of \eqref{transport-ode} is
\begin{equation}
\label{characteristic}
\Delta_{\full}(s, \varepsilon) = \det M(s, \varepsilon),
\end{equation}
where
\begin{equation}
\label{transfer}
M(s,\varepsilon) = \begin{bmatrix}
s I_n - A & B e^{-\Lambda^{-1} \varepsilon s}\\
 G_2 & I_m - G_1 e^{-\Lambda^{-1} \varepsilon s}
\end{bmatrix}.
\end{equation}
From \cite[Theorem~3.14]{bastin2016stability}, we have the following result relating stability of \eqref{transport-ode} and roots of $\Delta_{\full}$.

\begin{prop}
\label{prop:stab_full}
System \eqref{transport-ode} is exponentially stable if and only if there exists $\alpha > 0$ such that all roots $s$ of $\Delta_{\full}$ satisfy $\Real s \leq -\alpha$.
\end{prop}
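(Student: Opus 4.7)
\noindent\emph{Plan of proof.} My approach would exploit the equivalence, already mentioned in the excerpt, between the hyperbolic system \eqref{transport-ode} and the neutral delay system \eqref{eq:delay-neutral} obtained via the method of characteristics, and apply the classical spectral theory of linear neutral functional differential equations. Let $(T_\varepsilon(t))_{t \geq 0}$ denote the $C_0$-semigroup generated by \eqref{transport-ode} on $\R^n \times \mathcal{B}$; exponential stability of \eqref{transport-ode} is equivalent to the growth bound $\omega(T_\varepsilon)$ being strictly negative, so the task reduces to relating $\omega(T_\varepsilon)$ to the roots of $\Delta_{\full}(\cdot, \varepsilon)$. The first step is to identify the point spectrum of the infinitesimal generator $\mathcal{A}_\varepsilon$: an eigenvector associated with $s$ must have its PDE component of the form $y(x) = e^{-\varepsilon s \Lambda^{-1} x} y(0)$, and plugging this into the boundary relation and the ODE produces a finite-dimensional homogeneous linear system whose coefficient matrix is precisely $M(s, \varepsilon)$. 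Hence $s$ is an eigenvalue of $\mathcal{A}_\varepsilon$ if and only if $\Delta_{\full}(s, \varepsilon) = 0$.

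The forward implication is then straightforward: if $T_\varepsilon$ decays at some uniform exponential rate $\nu > 0$, then for any such $s$ the corresponding mode $t \mapsto e^{st}\xi$ must decay at rate $\nu$, forcing $\Real s \leq -\nu$ and giving the desired $\alpha = \nu$.

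The reverse implication is the technical core of the argument. Here I would invoke a Hale--Verduyn Lunel / Kaashoek--Kunst type formula for neutral semigroups, which asserts that $\omega(T_\varepsilon) = \max\{s(\mathcal{A}_\varepsilon),\, c_D(\varepsilon)\}$, where $s(\mathcal{A}_\varepsilon) = \sup\{\Real s : \Delta_{\full}(s, \varepsilon) = 0\}$ and $c_D(\varepsilon)$ is the stability abscissa of the pure difference operator obtained by discarding the $A z$, $B y(1, t)$, and $G_2 z$ contributions in \eqref{eq:delay-neutral}. The spectral assumption gives $s(\mathcal{A}_\varepsilon) \leq -\alpha$ directly, so the real obstacle is controlling $c_D(\varepsilon)$. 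For that I would use the fact that the characteristic function of this difference operator, $s \mapsto \det(I_m - G_1 e^{-\varepsilon s \Lambda^{-1}})$, coincides with $\Delta_{\bls}(\varepsilon s)$ (via the identity $\det(I-AB) = \det(I-BA)$), together with the fact that the roots of $\Delta_{\full}(\cdot, \varepsilon)$ accumulate at infinity along the vertical strips determined by the roots of $\Delta_{\bls}(\varepsilon \cdot)$; this last asymptotic localization follows from a Rouch\'e-type argument on the quasi-polynomial structure of $\Delta_{\full}$. A uniform bound $\Real s \leq -\alpha$ on the roots of $\Delta_{\full}$ then propagates to the same bound on the roots of $\Delta_{\bls}(\varepsilon \cdot)$, whence $c_D(\varepsilon) \leq -\alpha$, and combining the two contributions yields $\omega(T_\varepsilon) \leq -\alpha < 0$.
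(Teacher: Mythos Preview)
The paper does not actually prove Proposition~\ref{prop:stab_full}: it is stated as a direct citation of \cite[Theorem~3.14]{bastin2016stability}, with no argument given. So there is no ``paper's own proof'' to compare against; you are supplying a proof where the authors simply invoke the literature.

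That said, your sketch is a faithful outline of the classical argument behind the cited result. The identification of the point spectrum of $\mathcal{A}_\varepsilon$ with the zero set of $\Delta_{\full}(\cdot,\varepsilon)$ via the ansatz $y(x)=e^{-\varepsilon s\Lambda^{-1}x}y(0)$ is standard and correct, and the forward implication is immediate. For the reverse implication, the decomposition $\omega(T_\varepsilon)=\max\{s(\mathcal{A}_\varepsilon),\,c_D(\varepsilon)\}$ is exactly the Hale--Verduyn Lunel / Henry machinery for neutral equations, and your Rouch\'e argument is the right mechanism: expanding $\det M(s,\varepsilon)$ along the first block row shows that $\Delta_{\full}(s,\varepsilon)=s^n\,\Delta_{\bls}(\varepsilon s)+(\text{lower order in }s\text{ with bounded coefficients on vertical strips})$, so for large $\lvert\Imag s\rvert$ every zero of $\Delta_{\bls}(\varepsilon\,\cdot)$ has a zero of $\Delta_{\full}(\cdot,\varepsilon)$ nearby, and the bound $\Real s\leq -\alpha$ on the latter transfers to the former. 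One point that deserves a sentence of care in a full write-up: you need that the real parts of zeros of $\Delta_{\bls}(\varepsilon\,\cdot)$ are attained as limits of real parts of zeros of $\Delta_{\full}$, which uses that exponential polynomials have infinitely many zeros in every vertical strip meeting their zero set (Avellar--Hale type results, already cited in the paper's Lemma~\ref{lem:lower-bound-bls}), not merely that isolated zeros are approximated.
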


The roots of $\Delta_{\full}$, $\Delta_{\ros}$, and $\Delta_{\bls}$ are referred to in the sequel as \emph{poles} of systems \eqref{transport-ode}, \eqref{ros}, and \eqref{bls}, respectively.

\section{MAIN RESULT}
\label{sec:MainRes}

We are now able to state the main result of this article.

\begin{teo}
\label{full_stab}
Suppose that \eqref{ros} and \eqref{bls} are exponentially stable. Then there exists $\varepsilon_\ast > 0$ such that \eqref{transport-ode} is exponentially stable for every $\varepsilon \in (0, \varepsilon_\ast]$.
\end{teo}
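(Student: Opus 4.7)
The plan is to use the spectral characterization of Proposition \ref{prop:stab_full}: it suffices to produce $\alpha > 0$ and $\varepsilon_\ast > 0$ such that $\Delta_{\full}(\cdot, \varepsilon)$ has no zero in $\{\Real s \geq -\alpha\}$ for every $\varepsilon \in (0, \varepsilon_\ast]$. Fix $\alpha_0 > 0$ so that both $\Delta_{\ros}$ and $\Delta_{\bls}$ are zero-free on $\{\Real s \geq -\alpha_0\}$ (for $\Delta_{\bls}$, via Proposition \ref{prop:stab_bls}), and aim for $\alpha = \alpha_0/2$. Two asymptotic regimes of $M(s,\varepsilon)$ will drive the analysis. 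In the slow regime, setting $\varepsilon = 0$ in \eqref{transfer} and computing the block determinant using the invertible block $I_m - G_1$ yields $\Delta_{\full}(s, 0) = \det(I_m - G_1)\,\Delta_{\ros}(s)$. In the fast regime, multiplying the first $n$ rows of $M(\sigma/\varepsilon, \varepsilon)$ by $\varepsilon$ and letting $\varepsilon \to 0^+$ gives the uniform-on-compacta limit $\varepsilon^n \Delta_{\full}(\sigma/\varepsilon, \varepsilon) \to \sigma^n \Delta_{\bls}(\sigma)$.

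The argument proceeds by contradiction. Assume that for some sequence $\varepsilon_k \to 0^+$ there exist $s_k \in \mathbb C$ with $\Delta_{\full}(s_k, \varepsilon_k) = 0$ and $\Real s_k \geq -\alpha_0/2$. Extracting subsequences, $(s_k, \varepsilon_k s_k)$ falls into one of three exhaustive regimes. (i) $(s_k)$ is bounded, converging to some $s_\ast$; joint continuity of $\Delta_{\full}$ yields $\Delta_{\full}(s_\ast, 0) = 0$, hence $\Delta_{\ros}(s_\ast) = 0$, contradicting the choice of $\alpha_0$. (ii) $|s_k| \to \infty$ while $\sigma_k := \varepsilon_k s_k \to \sigma_\ast \in \mathbb C$. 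Since $\Real \sigma_k = \varepsilon_k \Real s_k \geq -\alpha_0\varepsilon_k/2 \to 0$, one has $\Real \sigma_\ast \geq 0$. Passing to the limit in the rescaled identity gives $\sigma_\ast^n \Delta_{\bls}(\sigma_\ast) = 0$. If $\sigma_\ast \neq 0$, this contradicts the choice of $\alpha_0$ for $\Delta_{\bls}$; if $\sigma_\ast = 0$, the contradiction is obtained via the Schur-complement factorization
\begin{equation*}
\Delta_{\full}(s, \varepsilon) = \det(sI_n - A)\,\det\bigl(I_m - G(s)\,e^{-\Lambda^{-1}\varepsilon s}\bigr), \quad G(s) := G_1 + G_2(sI_n - A)^{-1}B,
\end{equation*}
valid for $s \notin \mathrm{spec}(A)$, whose second factor converges to $\det(I_m - G_1) \neq 0$ since $G(s_k) \to G_1$ and $e^{-\Lambda^{-1}\sigma_k} \to I_m$. (iii) $|\sigma_k| \to \infty$. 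Because $\Real \sigma_k \to 0^+$, one has either $\Real \sigma_k \to +\infty$ (then $e^{-\Lambda^{-1}\sigma_k} \to 0$ in the factorization above, so the second factor tends to $1 \neq 0$, contradicting its vanishing) or $\Real \sigma_k$ is bounded while $|\Imag \sigma_k| \to \infty$.

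The last subcase is the main obstacle. Here one needs the sharper fact that exponential stability of \eqref{bls} yields a uniform lower bound $\inf_{\Real \sigma \geq -\alpha_0/2}|\Delta_{\bls}(\sigma)| \geq \delta_0 > 0$, which is strictly stronger than the pointwise statement of Proposition \ref{prop:stab_bls}; this refinement relies on the quasipolynomial structure of $\Delta_{\bls}$ together with the functional-analytic characterization of stability of difference equations in $L^p$ or $C^0$ recalled after Proposition \ref{prop:stab_bls} (see \cite[Remark~3.6]{bastin2016stability} and \cite[Corollary~4.11]{Chitour2016Stability}), which rule out accumulation of near-zeros of $\Delta_{\bls}$ along vertical lines in $\{\Real\sigma > -\alpha_0\}$. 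Granted this bound, the estimate $G(s_k) - G_1 = G_2(s_k I_n - A)^{-1}B \to 0$, combined with the boundedness of $e^{-\Lambda^{-1}\sigma_k}$, ensures that $\det(I_m - G(s_k)e^{-\Lambda^{-1}\sigma_k})$ stays within $\delta_0/2$ of $\Delta_{\bls}(\sigma_k)$ for $k$ large, hence is bounded away from $0$ and cannot vanish. This closes the last case and yields the theorem.
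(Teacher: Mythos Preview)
Your argument is correct and reaches the same conclusion, but it is organized differently from the paper's proof. The paper takes the Schur complement with respect to the \emph{lower-right} block $I_m - G_1 e^{-\Lambda^{-1}\varepsilon s}$, obtaining the clean factorization $\Delta_\full(s,\varepsilon) = \det M_1(s,\varepsilon)\,\Delta_{\bls}(\varepsilon s)$; once $\Delta_{\bls}(\varepsilon s)$ is shown to be bounded away from zero on $\{\Real s \geq -\alpha\}$, every offending root must be a root of $\det M_1(\cdot,\varepsilon)$, and these are uniformly bounded in modulus, reducing the whole argument to a single compactness-and-continuity step against $\Delta_{\ros}$. You instead take the Schur complement with respect to the upper-left block $sI_n - A$ (valid for $|s|$ large) and run a three-regime analysis on $(s_k,\varepsilon_k s_k)$. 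Your route is slightly longer but has the merit of making the two limiting mechanisms (slow variable $\to \Delta_{\ros}$, fast variable $\to \Delta_{\bls}$) explicit. Two small points: the phrase ``$\Real\sigma_k \to 0^+$'' at the start of case (iii) is a slip---what you mean and use is $\Real\sigma_k \geq -\alpha_0\varepsilon_k/2$, hence bounded below; and your justification of the uniform lower bound $\inf_{\Real\sigma \geq -\alpha_0/2}|\Delta_{\bls}(\sigma)| > 0$ via \cite[Remark~3.6]{bastin2016stability} and \cite[Corollary~4.11]{Chitour2016Stability} is not quite right, since those references address equivalence of stability across norms rather than the lower bound itself. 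The paper obtains this bound directly from the exponential-polynomial structure of $\Delta_{\bls}$ via \cite[Lemma~2.1(ii)]{avellar1980zeros} (see its Lemma~3.3), and that is the cleanest way to close your case (iii) as well.
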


The proof of Theorem~\ref{full_stab} relies on the spectral characterizations of exponential stability from Propositions~\ref{prop:stab_bls} and \ref{prop:stab_full}. The first step is to provide a suitable rewriting of the characteristic function $\Delta_\full$. For that purpose, we define, for each $s\in \mathbb{C}$ and $\varepsilon >0$ such that $\Delta_\bls(\varepsilon s) \neq 0$,
\begin{equation*}
M_1(s,\varepsilon) = sI_n-A- B e^{-\Lambda^{-1}\varepsilon s}(I_m-G_1 e^{-\Lambda^{-1} \varepsilon s})^{-1}G_2.
\end{equation*}
We now remark that, for such $s$ and $\varepsilon$, we have
\begin{multline}
\label{schur}
M(s,\varepsilon) \begin{bmatrix}
I_n & 0_{n\times m} \\ -(I_m-G_1e^{-\Lambda^{-1}\varepsilon s})^{-1}G_2 & I_m
\end{bmatrix}, \vspace{.1cm}\\ = \begin{bmatrix}
M_1(s,\varepsilon)& B e^{-\Lambda^{-1} \varepsilon s} \\ 0_{m \times n} & I_m - G_1 e^{-\Lambda^{-1} \varepsilon s}
\end{bmatrix}.
\end{multline}
Taking the determinant of the above identity, we deduce that
\begin{equation}
\label{roots}
\Delta_\full (s,\varepsilon) = \det M_1(s,\varepsilon) \Delta_{\bls}(\varepsilon s).
\end{equation}

To explain the idea of the proof of Theorem~\ref{full_stab}, let us note that, formally, taking $\varepsilon=0$ in \eqref{roots}, we obtain that $\Delta_\full(s, 0) = \det M_1(s, 0) \Delta_\bls(0)$. Since $\det M_1(s,0) = \Delta_{\ros}(s)$ and $\Delta_{\bls}(0) = \det(I_m - G_1) \neq 0$, we have that the roots of $\Delta_{\full}(\cdot, 0)$ coincide with those of $\Delta_{\ros}(\cdot)$. If the behavior of the system is continuous with respect to $\varepsilon$ as $\varepsilon \to 0^+$, we may then expect that the asymptotic behavior of \eqref{transport-ode} is similar to  the asymptotic behaviour of \eqref{ros} as $\varepsilon \to 0^+$. The main idea of the proof is to show that this expected behavior is indeed correct. More precisely, we will prove the following result.

\begin{prop}
\label{prop:main-spectral}
Suppose that \eqref{ros} and \eqref{bls} are exponentially stable and let $\alpha > 0$ be such that $\Real s < -\alpha$ for every root $s$ of $\Delta_\ros$. Then there exists $\varepsilon_\ast > 0$ such that, for every $\varepsilon \in (0, \varepsilon_\ast]$, we have $\Real s < -\alpha$ for every root $s$ of $s \mapsto \Delta_\full(s, \varepsilon)$.
\end{prop}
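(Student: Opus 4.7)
The plan is to exploit the factorization~\eqref{roots}, $\Delta_\full(s,\varepsilon)=\det M_1(s,\varepsilon)\,\Delta_\bls(\varepsilon s)$, and to analyze the two factors separately on the closed half-plane $\Omega=\{s\in\C:\Real s\ge -\alpha\}$. By the hypothesis of exponential stability of \eqref{bls} and Proposition~\ref{prop:stab_bls}, there exists $\beta>0$ such that every root of $\Delta_\bls$ has real part at most $-\beta$. Consequently, every root of $s\mapsto\Delta_\bls(\varepsilon s)$ satisfies $\Real s\le -\beta/\varepsilon$, which is strictly less than $-\alpha$ as soon as $\varepsilon<\beta/\alpha$. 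This settles the contribution of the boundary-layer factor, and the remaining task is to show that for $\varepsilon$ small enough the factor $\det M_1(\cdot,\varepsilon)$ (which is well defined on $\Omega$ for such $\varepsilon$, since $\Delta_\bls(\varepsilon s)$ does not vanish there) has no zero on $\Omega$.

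I would handle this by splitting $\Omega$ into a compact piece and a far-field tail. On the compact set $K_R=\{s\in\Omega:\abs{s}\le R\}$, the map $(s,\varepsilon)\mapsto\det M_1(s,\varepsilon)$ is jointly continuous at $\varepsilon=0$ and, since $e^{-\Lambda^{-1}\varepsilon s}\to I_m$ uniformly on $K_R$ as $\varepsilon\to 0^+$, one has $\det M_1(s,\varepsilon)\to\det(sI_n-A-B(I_m-G_1)^{-1}G_2)=\Delta_\ros(s)$ uniformly on $K_R$. By hypothesis $\Delta_\ros$ does not vanish on $\Omega$, so compactness yields $\inf_{K_R}\abs{\Delta_\ros}>0$, and the uniform convergence gives $\varepsilon_1>0$ such that $\det M_1(s,\varepsilon)\neq 0$ for every $(s,\varepsilon)\in K_R\times(0,\varepsilon_1]$.

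On the far-field region $\Omega\cap\{\abs{s}\ge R\}$, I would write $M_1(s,\varepsilon)=sI_n-F(s,\varepsilon)$ with
\[
F(s,\varepsilon)=A+Be^{-\Lambda^{-1}\varepsilon s}(I_m-G_1 e^{-\Lambda^{-1}\varepsilon s})^{-1}G_2,
\]
and try to establish a uniform estimate $\abs{F(s,\varepsilon)}\le C$ on $\Omega\times(0,\varepsilon_1]$. Once this is in hand, the elementary bound $\abs{\det(sI_n-F)}\ge\abs{s}^n-\sum_{k=1}^n\binom{n}{k}C^k\abs{s}^{n-k}$ shows that $\det M_1(s,\varepsilon)\neq 0$ as soon as $\abs{s}$ is sufficiently large, with a threshold $R$ depending only on $C$ and $n$. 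Combined with the compact step, this yields the conclusion upon taking $\varepsilon_\ast\in(0,\varepsilon_1]$ small enough that $\varepsilon_\ast<\beta/\alpha$.

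The main obstacle is precisely the uniform estimate on $F$, which reduces to showing that $(I_m-G_1 e^{-\Lambda^{-1} w})^{-1}$ is uniformly bounded on a half-plane of the form $\Real w\ge -\beta/2$ (the conditions $\Real s\ge -\alpha$ and $\varepsilon\le\beta/(2\alpha)$ forcing $\Real(\varepsilon s)\ge -\beta/2$). This uniform bound does not follow formally from Proposition~\ref{prop:stab_bls}; it is however a classical property of neutral-type quasi-polynomials whose spectrum lies in a strict left half-plane, and can be derived either from the frequency-domain characterization of exponential stability of \eqref{bls} in \cite{Chitour2016Stability}, or directly from the almost-periodicity of $\Delta_\bls$ along vertical lines combined with the absence of zeros in $\{\Real w\ge -\beta\}$. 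I expect this step to concentrate the technical effort of the proof.
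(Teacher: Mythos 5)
Your proposal is correct and follows essentially the same route as the paper: factor $\Delta_\full$ via \eqref{roots}, push the roots of $s\mapsto\Delta_\bls(\varepsilon s)$ far to the left by rescaling, bound the roots of $\det M_1(\cdot,\varepsilon)$ in the half-plane $\Real s \ge -\alpha$ uniformly in $\varepsilon$, and conclude by continuity at $\varepsilon = 0$ on the resulting compact set. The one step you defer to ``classical properties'' --- the uniform lower bound on $\abs{\Delta_\bls}$ in a right half-plane, which yields the uniform bound on $(I_m - G_1 e^{-\Lambda^{-1}\varepsilon s})^{-1}$ --- is precisely the content of the paper's Lemma~\ref{lem:lower-bound-bls}, obtained from \cite[Lemma~2.1(ii)]{avellar1980zeros} together with the limit $\Delta_\bls(s)\to 1$ as $\Real s \to +\infty$.
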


Due to Proposition~\ref{prop:stab_full}, Theorem~\ref{full_stab} will follow as an immediate corollary of Proposition~\ref{prop:main-spectral} once the latter is proved. Note that the existence of an $\alpha$ as in the statement of Proposition~\ref{prop:main-spectral} is guaranteed by the exponential stability of \eqref{ros}.

The proof of Proposition~\ref{prop:main-spectral} is based in the decomposition \eqref{roots} and it is split into several technical lemmas. We start by the following property of the roots of $\Delta_\bls$.

\begin{lem}
\label{lem:lower-bound-bls}
Assume that \eqref{bls} is exponentially stable. Then there exist $\tilde\alpha > 0$ and $\kappa > 0$ such that $\Real s \leq -2 \tilde\alpha$ for every root $s$ of $\Delta_\bls$ and $\abs{\Delta_\bls(s)} \geq \kappa$ for every $s \in \mathbb C$ with $\Real s \geq -\tilde\alpha$.
\end{lem}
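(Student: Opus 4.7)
My plan is the following. The first assertion is immediate: by the exponential stability of \eqref{bls} and Proposition~\ref{prop:stab_bls}, there exists $\alpha > 0$ such that every root of $\Delta_\bls$ has real part at most $-\alpha$, so we may take any $\tilde\alpha \in (0, \alpha/2)$.

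For the uniform lower bound, I would split the half-plane $\Omega := \{s \in \mathbb{C} : \Real s \geq -\tilde\alpha\}$ into the far-right region $\{\Real s \geq R\}$ and the vertical strip $\mathcal{S} := \{-\tilde\alpha \leq \Real s \leq R\}$, for a suitably large $R > 0$. Expanding the determinant in \eqref{char-bls} entry by entry, one obtains an explicit exponential-polynomial representation $\Delta_\bls(s) = 1 + \sum_{k=1}^{K} c_k e^{-\mu_k s}$, where each $\mu_k$ is a strictly positive sum of terms of the form $1/\lambda_i$. Consequently $\Delta_\bls(s) \to 1$ as $\Real s \to +\infty$ uniformly in $\Imag s$, so that for $R$ large enough we have $|\Delta_\bls(s)| \geq 1/2$ on $\{\Real s \geq R\}$.

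The main step is to establish $\inf_{s \in \mathcal{S}} |\Delta_\bls(s)| > 0$. On $\mathcal{S}$, the function $\Delta_\bls$ is entire, bounded, and has no zeros (by the first assertion, since $-\tilde\alpha > -\alpha$). I would argue by contradiction: suppose $\Delta_\bls(s_n) \to 0$ for some sequence $s_n \in \mathcal{S}$, and extract $\Real s_n \to \sigma^\ast \in [-\tilde\alpha, R]$. If $\Imag s_n$ stayed bounded, continuity of $\Delta_\bls$ would yield a zero in $\mathcal{S}$, a contradiction. Otherwise $|\Imag s_n| \to \infty$, and I consider the imaginary translates $g_n(s) := \Delta_\bls(s + i \Imag s_n) = 1 + \sum_k c_k e^{-i \mu_k \Imag s_n} e^{-\mu_k s}$. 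Since each $e^{-i \mu_k \Imag s_n}$ has modulus $1$ and $K$ is finite, a diagonal extraction produces limits $\zeta_k$ with $|\zeta_k|=1$ such that $g_n \to g$ uniformly on compact subsets of $\mathbb{C}$, where $g(s) := 1 + \sum_k c_k \zeta_k e^{-\mu_k s}$. Evaluating at the limit point, $g(\sigma^\ast) = \lim_n g_n(\Real s_n) = \lim_n \Delta_\bls(s_n) = 0$; and since the constant term of $g$ equals $1$ we have $g \not\equiv 0$. Hurwitz's theorem then produces, for any small $r > 0$ and $n$ large enough, a zero $s'_n$ of $\Delta_\bls$ with $s'_n \in D(\sigma^\ast + i \Imag s_n, r)$. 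Choosing $r < \alpha - \tilde\alpha$ forces $\Real s'_n > -\alpha$, contradicting the first assertion.

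The main obstacle is the Hurwitz step, which requires the limit $g$ of the translates to be nontrivial. This is precisely what the explicit exponential-polynomial representation of $\Delta_\bls$ delivers, since the constant term $1$ is invariant under imaginary translations. The remaining ingredients (boundedness of $\Delta_\bls$ on vertical strips and the normal-family extraction) are routine once that representation is in place.
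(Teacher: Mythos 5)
Your proof is correct, and the overall skeleton is the same as the paper's: both split the half-plane $\{\Real s \geq -\tilde\alpha\}$ into a far-right region, where the exponential-polynomial representation $\Delta_\bls(s) = 1 + \sum_k c_k e^{-\mu_k s}$ (with all $\mu_k > 0$) gives $\abs{\Delta_\bls(s)} \geq \tfrac{1}{2}$ for $\Real s$ large, and a vertical strip on which a uniform lower bound must be established. The difference lies in how that strip bound is obtained. The paper simply observes that every point of the strip is at distance at least $\tilde\alpha$ from the zero set of $\Delta_\bls$ and invokes \cite[Lemma~2.1(ii)]{avellar1980zeros}, which provides a quantitative lower bound for exponential polynomials away from their zeros. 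You instead give a self-contained compactness argument: assuming a minimizing sequence $s_n$ with $\Delta_\bls(s_n) \to 0$, you pass to vertical translates $g_n(s) = \Delta_\bls(s + i \Imag s_n)$, use the finiteness of the set of frequencies to extract a locally uniform limit $g$ that is again an exponential polynomial with constant term $1$ (hence $g \not\equiv 0$, since $g \to 1$ as $\Real s \to +\infty$) and vanishes at $\sigma^\ast$, and then apply Hurwitz's theorem to manufacture zeros of $\Delta_\bls$ with real part larger than $-\alpha$, a contradiction. Your argument buys independence from the external reference at the cost of being purely qualitative (no explicit $\kappa$), whereas the cited lemma is in principle quantitative; conceptually your proof is essentially a proof of the special case of that lemma needed here. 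All the delicate points — nontriviality of the limit $g$ via the invariant constant term, the choice of the Hurwitz radius $r < \alpha - \tilde\alpha$ so that the produced zeros violate the spectral bound, and the separate treatment of bounded $\Imag s_n$ — are handled correctly.
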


\begin{proof}
The existence of $\tilde\alpha > 0$ such that $\Real s \leq -2 \tilde\alpha$ for every root $s$ of $\Delta_\bls$ is a straightforward consequence of Proposition~\ref{prop:stab_bls}. In addition, we have that
\[
\lim_{\Real s \to +\infty} \Delta_\bls(s) = \det I_m = 1,
\]
and the above limit is uniform in $\Imag s$. Hence, there exists $\zeta > 0$ such that $\abs{\Delta_\bls(s)} \geq \tfrac{1}{2}$ for every $s \in \mathbb C$ with $\Real s \geq \zeta$.

Note that $\Delta_\bls$ is an exponential polynomial (in the sense of \cite{avellar1980zeros}), i.e., it can be written under the form $\Delta_\bls(s) = 1 + \sum_{j=1}^N a_j e^{-s \tau \cdot \gamma_j}$, where $N$ is a positive integer, $a_1, \dotsc, a_N$ are nonzero real coefficients, $\tau = (1/\lambda_1, \dotsc, 1/\lambda_m)$, $\lambda_1,\allowbreak \dotsc, \lambda_m$ are the diagonal entries of $\Lambda$, and $\gamma_1, \dotsc, \gamma_N$ are vectors in $\mathbb N^m$, where $\mathbb N$ denotes the set of nonnegative integers. In addition, for every $s \in \mathbb C$ with $-\tilde\alpha \leq \Real s \leq \zeta$, $s$ is at a distance of at least $\tilde\alpha > 0$ from every zero of $\Delta_\bls$, in the usual metric of $\mathbb C$. Hence, using \cite[Lemma~2.1(ii)]{avellar1980zeros}, there exists $\tilde\kappa > 0$ such that $\abs{\Delta_\bls(s)} \geq \tilde\kappa$ for every $s \in \mathbb C$ with $-\tilde\alpha \leq \Real s \leq \zeta$. The conclusion of the lemma follows by taking $\kappa = \min\{\tfrac{1}{2}, \tilde\kappa\}$.
\end{proof}

Using Lemma~\ref{lem:lower-bound-bls}, we obtain the following property on the behavior of $s \mapsto \Delta_\bls(\varepsilon s)$ for $\varepsilon$ small.

\begin{lem}
\label{lem:lower-bound-bls-epsilon}
Assume that \eqref{ros} and \eqref{bls} are exponentially stable and let $\alpha > 0$ be as in Proposition~\ref{prop:main-spectral}. Then there exist $\bar\varepsilon > 0$ and $\kappa > 0$ such that, for every $\varepsilon \in (0, \bar\varepsilon]$, we have $\Real s \leq - 2 \alpha$ for every root $s$ of $s \mapsto \Delta_\bls(\varepsilon s)$, and $\abs{\Delta_\bls(\varepsilon s)} \geq \kappa$ for every $s \in \mathbb C$ with $\Real s \geq -\alpha$.
\end{lem}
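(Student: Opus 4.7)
The plan is a straightforward rescaling argument, bootstrapping the bounds of Lemma~\ref{lem:lower-bound-bls} from the $w$-plane to the $s$-plane through the change of variable $w = \varepsilon s$. Concretely, I would take $\tilde\alpha > 0$ and $\kappa > 0$ as provided by Lemma~\ref{lem:lower-bound-bls} applied to the (unscaled) function $\Delta_\bls$, and then set $\bar\varepsilon := \tilde\alpha/\alpha$. The conclusion of the lemma will use these very same $\kappa$ and a slightly smaller parameter for the first assertion, so in particular no new constants need to be constructed.

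For the first assertion, the observation is that $s$ is a root of $s \mapsto \Delta_\bls(\varepsilon s)$ if and only if $w := \varepsilon s$ is a root of $\Delta_\bls$. Lemma~\ref{lem:lower-bound-bls} then yields $\Real(\varepsilon s) = \Real w \leq -2\tilde\alpha$, whence $\Real s \leq -2\tilde\alpha/\varepsilon$. For every $\varepsilon \in (0, \bar\varepsilon]$ one has $2\tilde\alpha/\varepsilon \geq 2\alpha$, so $\Real s \leq -2\alpha$, as required.

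For the second assertion, suppose $\Real s \geq -\alpha$ and $\varepsilon \in (0, \bar\varepsilon]$. Then $\Real(\varepsilon s) = \varepsilon \Real s \geq -\alpha\varepsilon \geq -\alpha\bar\varepsilon = -\tilde\alpha$ (using $\Real s \geq 0$ if $\Real s \geq 0$, and the estimate above otherwise; in both cases the bound $\Real(\varepsilon s) \geq -\tilde\alpha$ holds). Applying the second conclusion of Lemma~\ref{lem:lower-bound-bls} to the point $w = \varepsilon s$ immediately gives $\abs{\Delta_\bls(\varepsilon s)} \geq \kappa$, which is exactly what is needed.

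There is no substantial obstacle here: the statement is essentially a bookkeeping corollary of Lemma~\ref{lem:lower-bound-bls}. The only point that deserves a moment of attention is that the lower-bound conclusion of Lemma~\ref{lem:lower-bound-bls} is valid uniformly on an entire right half-plane $\{\Real w \geq -\tilde\alpha\}$, not just on a bounded vertical strip; this uniformity is what allows the (vertically unbounded) half-plane $\{\Real s \geq -\alpha\}$ to be handled in a single application after the rescaling, without any additional compactness argument in the imaginary direction.
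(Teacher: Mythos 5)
Your argument is correct and is essentially identical to the paper's proof: both take $\tilde\alpha$ and $\kappa$ from Lemma~\ref{lem:lower-bound-bls}, set $\bar\varepsilon = \tilde\alpha/\alpha$, and transfer the two conclusions through the rescaling $w = \varepsilon s$. The brief case distinction you make for the second assertion is unnecessary (the single inequality $\Real(\varepsilon s) = \varepsilon \Real s \geq -\varepsilon\alpha \geq -\tilde\alpha$ covers all $s$ with $\Real s \geq -\alpha$), but it does no harm.
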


\begin{proof}
Let $\tilde\alpha > 0$ and $\kappa > 0$ be given by Lemma~\ref{lem:lower-bound-bls}, set $\bar\varepsilon = \frac{\tilde\alpha}{\alpha}$, and let $\varepsilon \in (0, \bar\varepsilon]$. If $s$ is a root of $s \mapsto \Delta_\bls(\varepsilon s)$, then $\varepsilon s$ is a root of $\Delta_\bls$, thus $\Real (\varepsilon s) \leq - 2 \tilde\alpha$, implying that $\Real s \leq -\frac{2 \tilde\alpha}{\varepsilon} \leq -2\alpha$, as required. In addition, if $s \in \mathbb C$ is such that $\Real s \geq -\alpha$, then $\Real (\varepsilon s) \geq - \varepsilon \alpha \geq -\tilde\alpha$, and thus $\abs{\Delta_\bls(\varepsilon s)} \geq \kappa$ by Lemma~\ref{lem:lower-bound-bls}.
\end{proof}

We now turn to the analysis of the first term of \eqref{roots}, i.e., of $\det M(s, \varepsilon)$.

\begin{lem}
\label{bounded_pole}
Assume that \eqref{ros} and \eqref{bls} are exponentially stable, let $\alpha > 0$ be as in Proposition~\ref{prop:main-spectral}, and $\bar\varepsilon > 0$ be given by Lemma~\ref{lem:lower-bound-bls-epsilon}. Then there exists $R > 0$ such that, for every $\varepsilon \in (0, \bar\varepsilon]$, we have $\abs{s} \leq R$ for every root $s$ of $s \mapsto \det M(s, \varepsilon)$ with $\Real s \geq -\alpha$.
\end{lem}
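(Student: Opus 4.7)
The plan is to use the factorization \eqref{roots} together with Lemma~\ref{lem:lower-bound-bls-epsilon} to reduce the problem to bounding roots of $\det M_1(\cdot,\varepsilon)$, and then to exploit that such a root $s$ is an eigenvalue of an $n \times n$ matrix whose norm can be controlled uniformly in $s$ and $\varepsilon$ in the region of interest. First, if $\Real s \geq -\alpha$ and $\varepsilon \in (0,\bar\varepsilon]$, Lemma~\ref{lem:lower-bound-bls-epsilon} gives $\abs{\Delta_\bls(\varepsilon s)} \geq \kappa > 0$, so $M_1(s,\varepsilon)$ is well defined and \eqref{roots} implies that any root $s$ of $\Delta_\full(\cdot,\varepsilon)$ in this half-plane must satisfy $\det M_1(s,\varepsilon) = 0$. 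Writing $T(s,\varepsilon) = B e^{-\Lambda^{-1}\varepsilon s}(I_m - G_1 e^{-\Lambda^{-1}\varepsilon s})^{-1} G_2$, the identity $M_1(s,\varepsilon) v = 0$ for some nonzero $v \in \mathbb{C}^n$ yields $s v = (A + T(s,\varepsilon)) v$, hence $\abs{s} \leq \abs{A} + \abs{T(s,\varepsilon)}$.

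The main work is therefore to show that $\abs{T(s,\varepsilon)}$ is bounded by a constant independent of $s$ and $\varepsilon$ on the set $\{\Real s \geq -\alpha\} \times (0,\bar\varepsilon]$. Since $\Lambda$ is diagonal with positive entries $\lambda_1,\dotsc,\lambda_m$, the condition $\Real s \geq -\alpha$ with $\varepsilon \leq \bar\varepsilon$ gives
\[
\abs{e^{-\Lambda^{-1}\varepsilon s}} \leq \max_{1 \leq i \leq m} e^{\varepsilon \alpha/\lambda_i} \leq \max_{1 \leq i \leq m} e^{\bar\varepsilon \alpha/\lambda_i} =: C_1,
\]
so the matrix $I_m - G_1 e^{-\Lambda^{-1}\varepsilon s}$ has entries bounded uniformly in $(s,\varepsilon)$. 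Using the adjugate formula $(I_m - G_1 e^{-\Lambda^{-1}\varepsilon s})^{-1} = \mathrm{adj}(I_m - G_1 e^{-\Lambda^{-1}\varepsilon s})/\Delta_\bls(\varepsilon s)$, the numerator is a polynomial in the bounded entries of $G_1 e^{-\Lambda^{-1}\varepsilon s}$ and hence uniformly bounded, while the denominator is bounded below by $\kappa$ thanks to Lemma~\ref{lem:lower-bound-bls-epsilon}. This yields a uniform bound $\abs{(I_m - G_1 e^{-\Lambda^{-1}\varepsilon s})^{-1}} \leq C_2$, and consequently $\abs{T(s,\varepsilon)} \leq \abs{B}\, C_1 C_2 \abs{G_2}$.

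Combining these estimates, we obtain $\abs{s} \leq \abs{A} + \abs{B}\, C_1 C_2 \abs{G_2} =: R$ for every root $s$ of $\det M(\cdot,\varepsilon)$ with $\Real s \geq -\alpha$ and $\varepsilon \in (0,\bar\varepsilon]$, which is the desired conclusion. The delicate step is the control of $(I_m - G_1 e^{-\Lambda^{-1}\varepsilon s})^{-1}$: a priori this inverse could blow up along sequences $(s_k,\varepsilon_k)$ approaching a root of $\Delta_\bls$, so it is crucial here that Lemma~\ref{lem:lower-bound-bls-epsilon} supplies a uniform lower bound on $\abs{\Delta_\bls(\varepsilon s)}$ throughout the entire half-plane $\Real s \geq -\alpha$, and not merely away from the poles of the boundary-layer system.
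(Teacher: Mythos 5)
Your proof is correct and follows essentially the same route as the paper: reduce via the factorization \eqref{roots} and Lemma~\ref{lem:lower-bound-bls-epsilon} to roots of $\det M_1(\cdot,\varepsilon)$, view such a root as an eigenvalue of $A + Be^{-\Lambda^{-1}\varepsilon s}(I_m - G_1 e^{-\Lambda^{-1}\varepsilon s})^{-1}G_2$, and bound the inverse uniformly using the lower bound $\kappa$ on $\abs{\Delta_\bls(\varepsilon s)}$. Your adjugate-formula estimate for the inverse is the same bound the paper obtains by citing Kato.
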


\begin{proof}
Thanks to Lem\-ma~\ref{lem:lower-bound-bls-epsilon}, $I_m - G_1 e^{-\Lambda^{-1} \varepsilon s}$ is invertible for every $\varepsilon \in (0, \bar\varepsilon]$ and $s \in \mathbb C$ with $\Real s \geq -\alpha$, and thus $M_1(s, \varepsilon)$ is well-defined for all such $\varepsilon$ and $s$.

Fix $\varepsilon \in (0, \bar\varepsilon]$ and let $s \in \mathbb C$ be such that $\det M_1(s, \varepsilon) = 0$ and $\Real s \geq -\alpha$. Since $\det M_1(s, \varepsilon) = 0$, we have that $s$ is an eigenvalue of the matrix $A+ Be^{-\Lambda^{-1}\varepsilon s} (I_m-G_1e^{-\Lambda^{-1}\varepsilon s})^{-1} G_2$, and thus
\begin{equation}
\label{eq:estim-s}
\begin{aligned}
\abs{s} & \leq \rho(A + B e^{-\Lambda^{-1}\varepsilon s} (I_m - G_1 e^{-\Lambda^{-1} \varepsilon s})^{-1} G_2), \\
& \leq \abs*{A}_1+ \abs*{B e^{-\Lambda^{-1}\varepsilon s}}_1 \abs*{(I_m - G_1 e^{-\Lambda^{-1}\varepsilon s})^{-1}}_1 \abs*{G_2}_1,
\end{aligned}
\end{equation}
which comes from the fact that the $\rho(M)\leq \abs{M}$ for any induced matrix norm $\abs{\cdot}$. Note that
\begin{equation}
\label{eq:estim-B}
\abs*{B e^{-\Lambda^{-1}\varepsilon s}}_1 = \max_{j\in \{1, \dotsc,m\}} \abs{B_j}_1 \abs*{e^{-\frac{\varepsilon s}{\lambda_j}}} \leq \abs{B}_1 e^{\frac{\bar\varepsilon \alpha}{\lambda_{\min}}},
\end{equation}
where $B_1, \dotsc, B_m$ denote the columns of $B$ and $\lambda_{\min}$ is the smallest diagonal entry of $\Lambda$. On the other hand, using \cite[Chapter~1, (4.12)]{kato1980short}, there exists a constant $C > 0$ depending only on the dimension $m$ such that
\begin{equation*}
\abs{(I_m - G_1 e^{-\Lambda^{-1}\varepsilon s})^{-1}}_1 \leq C \frac{\abs{I_m - G_1 e^{-\Lambda^{-1}\varepsilon s}}_1^{m-1}}{\det (I_m - G_1 e^{-\Lambda^{-1}\varepsilon s})}.
 \end{equation*}
Similarly to the estimate of $\abs{B e^{-\Lambda^{-1}\varepsilon s}}_1$, we obtain that
\begin{equation}
\label{eq:estim-G1}
\abs{(I_m - G_1 e^{-\Lambda^{-1} \varepsilon s})^{-1}}_1 \leq \tfrac{C}{\kappa} \left(1 + \abs{G_1}_1 e^{\frac{\bar\varepsilon \alpha}{\lambda_{\min}}}\right)^{m-1},
\end{equation}
where $\kappa$ is the constant from Lemma~\ref{lem:lower-bound-bls-epsilon}. Combining \eqref{eq:estim-s}, \eqref{eq:estim-B}, and \eqref{eq:estim-G1}, we obtain the conclusion.
\end{proof}

We are now in position to prove Proposition~\ref{prop:main-spectral} and conclude the proof of Theorem~\ref{full_stab}.

{\renewcommand{\proof}{\noindent\hspace{2em}{\itshape Proof of Proposition~\ref{prop:main-spectral}: }}
\begin{proof}
Let $\bar\varepsilon$ and $R$ be given by Lemma~\ref{bounded_pole} and assume, to obtain a contradiction, that there exist a sequence of positive real numbers $(\varepsilon_k)_{k \in \mathbb N}$ and a sequence of complex numbers $(s_k)_{k \in \mathbb N}$ such that $\varepsilon_k \to 0$ as $k \to \infty$ and, for every $k \in \mathbb N$, we have $\Delta_\full(s_k, \varepsilon_k) = 0$ and $\Real s_k \geq -\alpha$.

Up to excluding finitely many terms of the sequence, we have $\varepsilon_k \in (0, \bar\varepsilon]$ for every $k \in \mathbb N$, and thus, by Lemma~\ref{bounded_pole}, we have $\abs{s_k} \leq R$ for every $k \in \mathbb N$. Hence, up to extracting a subsequence (which we still denote using the same notation for simplicity), there exists $s_\ast$ such that $s_k \to s_\ast$ as $k \to \infty$. In particular, $\Real s_\ast \geq -\alpha$.

Note that, by Lemma~\ref{lem:lower-bound-bls-epsilon}, we have $\Delta_\bls(\varepsilon_k s_k) \neq 0$ for every $k \in \mathbb N$ and thus, using \eqref{roots} we obtain that $\det M_1(s_k, \varepsilon_k) = 0$ for every $k \in \mathbb N$. Since $M_1$ is continuous on the set $\{s \in \mathbb C \text{ : } \Real s \geq -\alpha\} \times [0, \bar\varepsilon]$, we deduce, letting $k \to +\infty$, that $\det M_1(s_\ast, 0) = 0$. Since $\Delta_\ros(\cdot) = \det M_1(\cdot, 0)$, we deduce that $\Delta_\ros(s_\ast) = 0$, which is a contradiction since $\Real s_\ast \geq -\alpha$ but $\Real s < -\alpha$ for every root $s$ of $\Delta_\ros$, by definition of $\alpha$. This contradiction establishes the result.
\end{proof}
}

\section{COMPARISON WITH RESPECT TO THE LYAPUNOV APPROACH}
\label{sec:Comparison}

We now compare our main result, Theorem~\ref{full_stab}, to \cite[Theorem~1]{tang2017stability}, which is the main result of that reference for the singular perturbation of \eqref{transport-ode}. In this section, we shall denote by $\mathcal D^+_m$ be the set of $m \times m$ diagonal matrices with positive diagonal entries.

\begin{teo}[{\cite[Theorem~1]{tang2017stability}}]
\label{thm:tang2017}
Suppose that \eqref{ros} is exponentially stable and there exist $\mu > 0$ and $Q \in \mathcal D^+_m$ such that $e^{-\mu} Q \Lambda - G_1^T Q \Lambda G_1$ is positive definite. Then there exists $\varepsilon_\ast > 0$ such that \eqref{transport-ode} is exponentially stable in $\mathbb R^n \times L^2(0, 1; \mathbb R^m)$ for every $\varepsilon \in (0, \varepsilon_\ast]$.
\end{teo}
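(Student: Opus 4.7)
The plan is to prove Theorem~\ref{thm:tang2017} by a Lyapunov argument in the spirit of \cite{bastin2016stability}, combining a quadratic form for the reduced-order ODE \eqref{ros} with a weighted exponential functional for the transport PDE evaluated along the natural error variable. First I would perform the change of unknown $\tilde y(x,t) = y(x,t) - K z(t)$ with $K := (I_m - G_1)^{-1} G_2$. A direct computation shows that this coordinate satisfies the homogeneous boundary condition $\tilde y(0,t) = G_1 \tilde y(1,t)$ and that the full system \eqref{transport-ode} rewrites as
\[
\dot z = A_r z + B \tilde y(1,t), \qquad \varepsilon \tilde y_t + \Lambda \tilde y_x = -\varepsilon K (A_r z + B \tilde y(1,t)),
\]
where $A_r := A + B(I_m - G_1)^{-1} G_2$ is the generator of \eqref{ros}. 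Since \eqref{ros} is exponentially stable, there exists a symmetric positive definite $P$ satisfying $A_r^T P + P A_r = -I_n$.

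With the matrix $Q$ provided by the hypothesis, I would introduce the candidate Lyapunov functional
\[
V(z, \tilde y) = z^T P z + \eta \int_0^1 \tilde y(x)^T Q e^{-\mu x} \tilde y(x) \diff x,
\]
where $\eta > 0$ is a free parameter to be calibrated. The derivative of the ODE part along trajectories is $-\abs{z}^2 + 2 z^T P B \tilde y(1,t)$. For the PDE part, integrating by parts the transport contribution and using $\tilde y(0,t) = G_1 \tilde y(1,t)$ produces the dissipative interior term $-\tfrac{\eta \mu}{\varepsilon} \int_0^1 \tilde y^T Q \Lambda \tilde y \, e^{-\mu x} \diff x$, the dissipative boundary term $-\tfrac{\eta}{\varepsilon} \tilde y(1,t)^T W \tilde y(1,t)$ with $W := e^{-\mu} Q \Lambda - G_1^T Q \Lambda G_1$, plus cross terms coming from $-\varepsilon K \dot z$ that are bilinear in $(z,\tilde y)$ and, crucially, not amplified by $1/\varepsilon$.

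The key structural feature is that the hypothesis of Theorem~\ref{thm:tang2017} makes $W$ positive definite, so both dissipative contributions on the PDE side are strictly negative and scale like $1/\varepsilon$. I would then dominate every cross term by Young's inequality, distributing it between the ODE dissipation $-\abs{z}^2$, the interior $L^2$ dissipation, and the boundary-trace dissipation. Concretely, the ODE--boundary coupling $2 z^T P B \tilde y(1,t)$ is absorbed partly in $-\abs{z}^2$ and partly in the $1/\varepsilon$-amplified term $-\tfrac{\eta}{\varepsilon} \tilde y(1,t)^T W \tilde y(1,t)$; the PDE cross terms arising from $\varepsilon K(A_r z + B \tilde y(1,t))$ are handled similarly, using Cauchy--Schwarz for the integral pairing with $z$ and the boundary-trace dissipation for the part involving $\tilde y(1,t)$.

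The main obstacle is the bookkeeping of the three small parameters $\eta$, the various Young coefficients, and $\varepsilon$: one must first fix $\eta$ small enough so that the PDE cross terms cannot swamp the ODE dissipation, then fix $\varepsilon$ small enough so that the $1/\varepsilon$-enhanced PDE dissipation majorizes every remaining contribution, both at the boundary trace $\tilde y(1,t)$ and in the interior $L^2$ term. Once the constants are chosen in that order, one obtains $\dot V \leq -\gamma V$ for some $\gamma > 0$ and every $\varepsilon \in (0, \varepsilon_\ast]$. Finally, since $V$ is equivalent to $\abs{z}^2 + \norm{\tilde y}_2^2$ by positivity of $P$ and $Q$, and since $\tilde y$ differs from $y$ by a bounded linear function of $z$, this inequality transfers to exponential decay of $\abs{z(t)}^2 + \norm{y(\cdot,t)}_2^2$, yielding exponential stability of \eqref{transport-ode} in $\mathbb R^n \times L^2(0,1; \mathbb R^m)$.
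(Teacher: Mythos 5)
Your proposal is correct, but it is worth noting that the paper itself does not prove Theorem~\ref{thm:tang2017}: it imports it verbatim from \cite{tang2017stability} and then observes that, within the paper's framework, it is a corollary of the spectral result Theorem~\ref{full_stab}, because the hypothesis on $e^{-\mu} Q \Lambda - G_1^T Q \Lambda G_1$ implies (via \cite[Proposition~2]{tang2017stability}, or equivalently Lemma~\ref{lem:rho2} and the criterion $\rho_2(G_1)<1$) that the boundary-layer system \eqref{bls} is exponentially stable. What you have written is instead a reconstruction of the original Lyapunov proof of \cite{tang2017stability}, and it is sound: the change of variable $\tilde y = y - (I_m-G_1)^{-1}G_2 z$ does produce the homogeneous boundary condition and the $-\varepsilon K \dot z$ source term; the integration by parts of the weighted functional yields exactly the boundary matrix $W = e^{-\mu}Q\Lambda - G_1^T Q\Lambda G_1$ and the interior term $-\tfrac{\eta\mu}{\varepsilon}\int_0^1 e^{-\mu x}\tilde y^T Q\Lambda\tilde y\,\diff x$, both amplified by $1/\varepsilon$; and your order of quantifiers (fix $\eta$ first against the $\abs{z}^2$ dissipation, then shrink $\varepsilon$ so the $1/\varepsilon$ terms absorb the boundary-trace and interior cross terms) is the correct one. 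Two small points you should make explicit in a full write-up: the computation is formal for weak $L^2$ solutions, so the integration by parts and the use of the trace $\tilde y(1,t)$ must be justified by a density argument on regular solutions (as in \cite[Appendix~A]{bastin2016stability}); and you should state that $V$ is equivalent to $\abs{z}^2+\norm{\tilde y}_2^2$ \emph{uniformly in} $\varepsilon$, which holds here since $\eta$ is fixed before $\varepsilon$. The comparison between the two routes is the main point of the paper: the Lyapunov route gives explicit constants and an explicit $\varepsilon_\ast$, but it genuinely needs the matrix inequality (equivalently $\rho_2(G_1)<1$), whereas the spectral route of Theorem~\ref{full_stab} only needs exponential stability of \eqref{bls}, which Example~\ref{expl:better} shows is strictly weaker.
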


The existence of $\mu > 0$ and $Q \in \mathcal D^+_m$ such that $e^{-\mu} Q \Lambda - G_1^T Q \Lambda G_1$ is positive definite is shown in \cite[Proposition~2]{tang2017stability} to imply the exponential stability of \eqref{bls}. Hence, Theorem~\ref{thm:tang2017} can be seen as a particular case of Theorem~\ref{full_stab}: whenever the assumptions of Theorem~\ref{thm:tang2017} are satisfied, the assumptions of Theorem~\ref{full_stab} are satisfied as well.

Before proving that Theorem~\ref{full_stab} is strictly stronger than Theorem~\ref{thm:tang2017}, let us provide the following criterion.

\begin{lem}
\label{lem:rho2}
Let $G_1$ be an $m \times m$ matrix with real entries and $\Lambda \in \mathcal D^+_m$. The following assertions are equivalent.
\begin{enumerate}
\item\label{lem:rho2_1} There exist $\mu > 0$ and $Q \in \mathcal D^+_m$ such that $e^{-\mu} Q \Lambda - G_1^T Q \Lambda G_1$ is positive definite.
\item\label{lem:rho2_2} $\inf_{D \in \mathcal D^+_m} \norm{D G_1 D^{-1}}_2 < 1$.
\end{enumerate}
\end{lem}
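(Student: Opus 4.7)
The plan is to reduce both statements to a common form via a diagonal change of variable. With $\Lambda \in \mathcal D^+_m$ fixed, the map $Q \mapsto D := (Q\Lambda)^{1/2}$ is a bijection of $\mathcal D^+_m$ onto itself, with inverse $D \mapsto Q = \Lambda^{-1} D^2$. So I can reparametrise assertion (1) in terms of $D$ rather than $Q$.

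With the substitution $D^2 = Q\Lambda$, and using $D^T = D$ since $D$ is diagonal, a direct calculation gives
\[
e^{-\mu} Q\Lambda - G_1^T Q\Lambda G_1 = D \bigl( e^{-\mu} I_m - (DG_1D^{-1})^T(DG_1D^{-1}) \bigr) D.
\]
Since $D \in \mathcal D^+_m$ is invertible and symmetric, the left-hand side is positive definite if and only if the middle factor is, i.e.\ if and only if $(DG_1D^{-1})^T(DG_1D^{-1}) < e^{-\mu} I_m$, equivalently $\norm{DG_1D^{-1}}_2^2 < e^{-\mu}$. Hence (1) is equivalent to the existence of some $D \in \mathcal D^+_m$ and some $\mu > 0$ with $\norm{DG_1D^{-1}}_2 < e^{-\mu/2}$.

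From this reformulation both implications are immediate. If (1) holds, then taking $D := (Q\Lambda)^{1/2}$ gives $\norm{DG_1D^{-1}}_2 < e^{-\mu/2} < 1$, so the infimum in (2) is strictly below $1$. Conversely, if (2) holds, then by definition of infimum there exists $D \in \mathcal D^+_m$ with $\eta := \norm{DG_1D^{-1}}_2^2 < 1$; any choice of $\mu \in (0, \ln(1/\eta))$ together with $Q := \Lambda^{-1} D^2 \in \mathcal D^+_m$ then yields (1), since $\eta < e^{-\mu}$.

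The only substantive step is recognising the substitution $D = (Q\Lambda)^{1/2}$; once it is in place, the rest reduces to the identity $\norm{M}_2^2 = \lambda_{\max}(M^T M)$ and the elementary fact that congruence by an invertible symmetric matrix preserves positive definiteness, so I do not foresee any genuine obstacle.
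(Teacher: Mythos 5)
Your proof is correct and follows essentially the same route as the paper's: both hinge on the substitution $Q\Lambda = D^2$, congruence by the invertible diagonal matrix $D$, and the identification of $\norm{D G_1 D^{-1}}_2^2$ with the largest eigenvalue of $D^{-1}G_1^T D^2 G_1 D^{-1}$ (the paper makes this last step via an explicit orthogonal diagonalization, you via $\norm{M}_2^2 = \lambda_{\max}(M^T M)$, which is the same content). The factorization identity you display is a slightly cleaner packaging of the paper's chain of equivalences, but there is no substantive difference.
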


\begin{proof}
Note that \ref{lem:rho2_2}) holds true if and only if there exists $D \in \mathcal D^+_m$ such that $\norm{D G_1 D^{-1}}_2 < 1$ and, using the expression of the induced matrix $2$-norm, the latter condition is equivalent to $\rho(M) < 1$, where $M=D^{-1}G^T_1 D^2 G_1D^{-1}$.
Since $M=(D^{-1} G_1^T D)(D^{-1} G_1^T D)^T$, it follows that $M$ is a symmetric positive semi-definite matrix, and thus there exists a real orthogonal matrix $U$ such that $U M U^T$ is diagonal, and its diagonal entries are the eigenvalues of $M$. Hence, $\rho(M) < 1$ if and only if there exists $\mu > 0$ such that $e^{-\mu} I_m - U M U^T$ is positive definite. Multiplying by $U^T$ on the left and by $U$ on the right, we deduce that the latter condition is equivalent to $e^{-\mu} I_m - M$ being positive definite, which, up to multiplying by $D^T = D$ on the left and $D$ on the right, is equivalent to $e^{-\mu} D^2 - G_1^T D^2 G_1$ being positive definite. We have thus shown that \ref{lem:rho2_2}) holds true if and only if there exists $D \in \mathcal D^+_m$ and $\mu > 0$ such that $e^{-\mu} D^2 - G_1^T D^2 G_1$ is positive definite. The conclusion follows by imposing the relation $Q \Lambda = D^2$ between $Q$, $\Lambda$, and $D$, which is possible since these three matrices belong to $\mathcal D^+_m$.
\end{proof}

\begin{rmk}
The quantity $\inf_{D \in \mathcal D^+_m} \norm{D G_1 D^{-1}}_2$ from assertion \ref{lem:rho2_2}) of Lemma~\ref{lem:rho2} is usually denoted in the literature by $\rho_2(G_1)$, and the condition $\rho_2(G_1) < 1$ is a sufficient condition for the stability in $L^2$ of \eqref{bls} (see, e.g., \cite[Section~3.1]{bastin2016stability}).
\end{rmk}

We now provide an example showing that Theorem~\ref{full_stab} is strictly stronger than Theorem~\ref{thm:tang2017}, i.e., that there exist $\Lambda \in \mathcal D^+_m$ and a matrix $G_1$ of size $m \times m$ such that \eqref{bls} is exponentially stable but, for every $\mu > 0$ and $Q \in \mathcal D^+_m$, the matrix $e^{-\mu} Q \Lambda - G_1^T Q \Lambda G_1$ is not positive definite.

\begin{ex}
\label{expl:better}
Let $m = 2$ and consider
\[
G_1 = \begin{bmatrix}
1 & -2 \\
\frac{1}{4} & -\frac{1}{2} \\
\end{bmatrix}, \qquad \Lambda = \begin{bmatrix}
1 & 0 \\
0 & \frac{1}{2} \\
\end{bmatrix}.
\]
In this case, the characteristic function \eqref{char-bls} of the boundary-layer system \eqref{bls} is
\[
\Delta_{\bls}(s) = 1 - e^{-s} + \tfrac{1}{2} e^{-2s},
\]
whose roots $s$ satisfy $e^s = \tfrac{\sqrt{2}}{2} e^{\pm i \frac{\pi}{4}}$, i.e., the set of roots of $\Delta_{\bls}$ is $\left\{-\tfrac{1}{2} \ln 2 \pm i (\tfrac{\pi}{4} + 2 k \pi) \text{ : } k \in \mathbb Z\right\}$. Since all these roots have real part equal to $-\tfrac{1}{2} \ln 2$, the boundary-layer system \eqref{bls} is exponentially stable, and hence the assumption on \eqref{bls} from Theorem~\ref{full_stab} is satisfied.

On the other hand, for every $D \in \mathcal D^+_2$, denoting by $d_1$, $d_2$ the diagonal entries of $D$ and $\delta = \frac{d_1}{d_2}$, we have
\[
D^{-1} G_1^T D^2 G_1 D^{-1} = \begin{bmatrix}
1 + \tfrac{1}{16 \delta^2} & - 2 \delta - \tfrac{1}{8 \delta} \\
-2 \delta - \tfrac{1}{8\delta} & 4 \delta^2 + \tfrac{1}{4}.
\end{bmatrix}
\]
The eigenvalues of the above matrix are $0$ and $\tfrac{5}{4} + 4 \delta^2 + \tfrac{1}{16 \delta^2}$, and thus $\norm{D G_1 D^{-1}}_2^2 = \rho(D^{-1} G_1^T D^2 G_1 D^{-1}) = \tfrac{5}{4} + 4 \delta^2 + \tfrac{1}{16 \delta^2} > 1$ for every $D \in \mathcal D^+_2$. Hence, using Lemma~\ref{lem:rho2}, we conclude that the assumption on $G_1$ and $\Lambda$ from Theorem~\ref{thm:tang2017} is not satisfied.
\end{ex}

\section{NUMERICAL ILLUSTRATION}
\label{sec:Numerical}

As an illustration of Theorem~\ref{full_stab}, we provide a numerical si\-mu\-lation\footnote{Simulations were done in Python, with an explicit computation of the solution of \eqref{ros}, an upwind scheme in space and an explicit Euler scheme in time for the hyperbolic PDEs of \eqref{transport-ode} and \eqref{bls}, and an explicit Euler scheme in time for the ODE in \eqref{transport-ode}. The time step was adapted according to the value of $\varepsilon$ and the space step was chosen in order to satisfy a CFL stability condition for the numerical scheme. The simulation code is available in {\scriptsize\url{https://gitlab.inria.fr/mazanti/singular-perturbation-fast-transport-pde-and-slow-ode}}.} of \eqref{transport-ode} when the parameters $G_1$ and $\Lambda$ of the boun\-dary-layer system are those of Example~\ref{expl:better} and with
\begin{gather*}
n = 1,\quad A = 2,\quad B = \begin{bmatrix}1 & 2 \\\end{bmatrix},\quad G_2 = \begin{bmatrix}-1 & 0\end{bmatrix}^T, \\
z_0 = 1,\quad y_0(x) = \begin{bmatrix}-\cos\left(\tfrac{5\pi}{2} x\right) & 0 \end{bmatrix}^T,
\end{gather*}
for different values of $\varepsilon$. Note that \eqref{bls} is exponentially stable with this choice of parameters, as shown in Example~\ref{expl:better}, and \eqref{ros} reads $\dot{\bar z}(t) = -2 \bar z(t)$, so it is also exponentially stable. Hence, the assumptions of Theorem~\ref{full_stab} are satisfied. In addition, the initial conditions were chosen so that the compatibility condition $y_0(0) = G_1 y_0(1) + G_2 z_0$ required for the existence of solutions in $C^0$ is satisfied.

\begin{figure}[ht]
\centering
\resizebox{0.75\columnwidth}{!}{\input{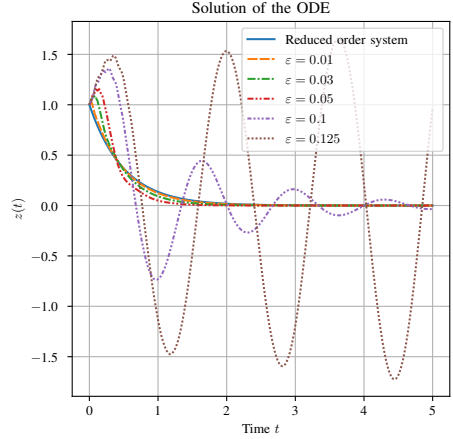}}
\caption{Solution of the ODE in \eqref{transport-ode} for various values of $\varepsilon$ and solution of the reduced order system \eqref{ros}.}
\label{fig:ode}
\end{figure}

The solution of the ODE in \eqref{transport-ode} is represented in Fig.~\ref{fig:ode} for various values of $\varepsilon$, together with the solution of the reduced order system \eqref{ros}. We observe that, for $\varepsilon$ large, the system \eqref{transport-ode} seems unstable, as $z$ oscillates with an increasing amplitude, but stability is recovered as $\varepsilon$ is reduced, and the behavior of $z$ becomes close to that of the solution $\bar z$ of \eqref{ros}.

\begin{figure*}
\begin{tabular}{@{} >{\centering} m{0.4\textwidth} @{} >{\centering} m{0.6\textwidth} @{}}
\resizebox{0.36\textwidth}{!}{\input{sol_pde_trace.pgf}} \vspace*{-1.5em} &
\begin{tikzpicture}
\node[inner sep=0pt] at (0, 0) {\includegraphics[width=0.58\textwidth]{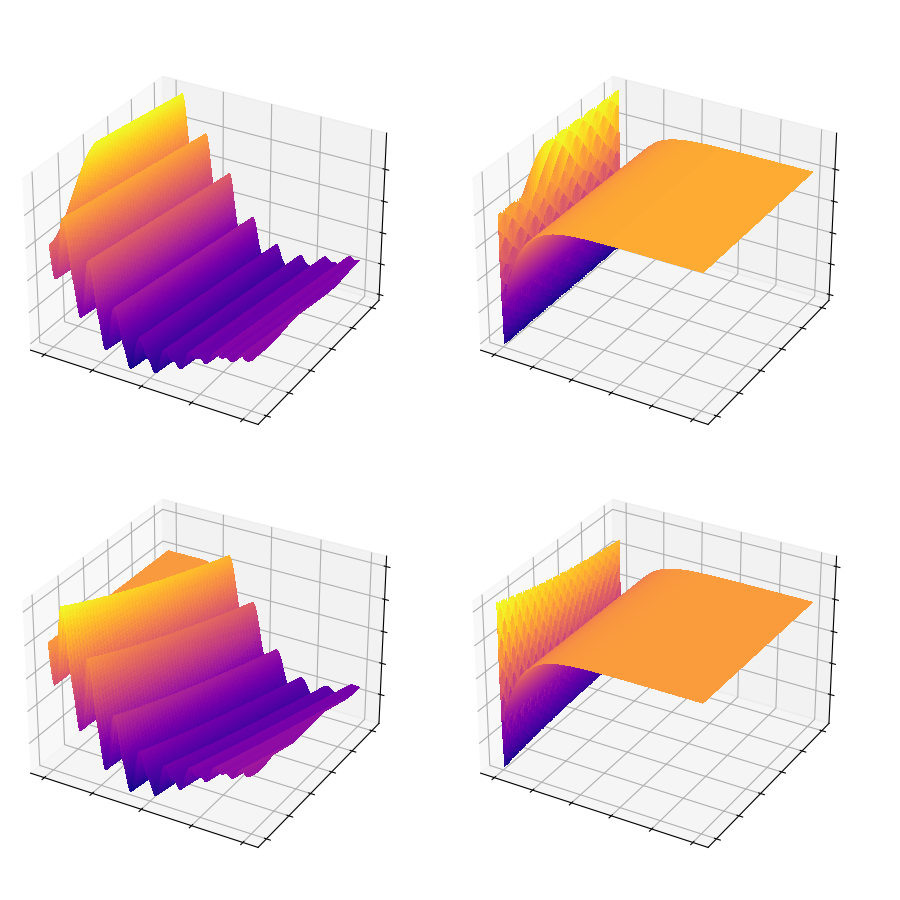}};
\node[inner sep=0pt] at (0, 0) {\resizebox{0.58\textwidth}{!}{\input{sol_pde.pgf}}};
\end{tikzpicture} \tabularnewline
(a) & (b)
\end{tabular}
\caption{(a) Trace of the solution of the PDE in \eqref{transport-ode} at $x = 0$ for various values of $\varepsilon$, compared with the function $\bar y_\ast(t) = (I_m - G_1)^{-1} G_2 \bar z(t)$. (b) Solution of the PDE in \eqref{transport-ode} with $\varepsilon = 0.01$, in the time intervals $[0, 8\varepsilon]$ (left) and $[0, 5]$ (right). In both (a) and (b), the top and bottom figures represent the components $y_1$ and $y_2$ of $y$, respectively.}
\label{fig:pde}
\end{figure*}

Fig.~\ref{fig:pde}(a) represents the trace at $x = 0$ of the solution of the PDE in \eqref{transport-ode} for the same values of $\varepsilon$ as those from Fig.~\ref{fig:ode}. Following the discussion in Section~\ref{sec:SPM}, we expect $y(t, x)$ to be close, when $\varepsilon$ is small enough, to $\bar y_\ast(t) = (I_m - G_1)^{-1} G_2 \bar z(t)$ (which is constant in $x$), and so the latter function is also represented in Fig.~\ref{fig:pde}(a). As in Fig.~\ref{fig:ode}, we observe an unstable behavior for $\varepsilon$ large, but it becomes stable as $\varepsilon$ is small. For small $\varepsilon$, we observe strong oscillations of the solution for small times, corresponding to the fast dynamics of the system. Fig.~\ref{fig:pde}(b) represents the solution of the PDE in \eqref{transport-ode} when $\varepsilon = 0.01$, detailing, in the left, the behavior of such solutions in the small time interval $[0, 8\varepsilon]$. We observe that, at $t = 8 \varepsilon$, the solution $y$ is close to being constant in space, as expected following the discussion in Section~\ref{sec:SPM}.

\section{CONCLUSION}
\label{sec:Concl}

In this article we have applied the singular perturbation method to system \eqref{transport-ode}, showing that it is exponentially stable as soon as both subsystems \eqref{ros} and \eqref{bls} are exponentially stable and $\varepsilon$ is small enough. This result was proved using a suitable analysis of the behavior of the spectrum of \eqref{transport-ode} when $\varepsilon$ is small enough, through the spectral information of \eqref{ros} and \eqref{bls}. Moreover, our approach improves the previous result from \cite[Theorem~1]{tang2017stability}, which had been obtained through a Lyapunov analysis: the condition on \eqref{bls} of our main result, Theorem~\ref{full_stab}, is less restrictive than that from \cite[Theorem~1]{tang2017stability}.

\bibliographystyle{IEEEtran}
\bibliography{references}

\end{document}